%------------------------------------------------------------------------------
% Beginning of journal.tex
%------------------------------------------------------------------------------
%
% AMS-LaTeX version 2 sample file for journals, based on amsart.cls.
%
%        ***     DO NOT USE THIS FILE AS A STARTER.      ***
%        ***  USE THE JOURNAL-SPECIFIC *.TEMPLATE FILE.  ***
%
% Replace amsart by the documentclass for the target journal, e.g., tran-l.
%
\documentclass[12pt]{amsart}
\usepackage{amssymb}
\usepackage{amsfonts}
\usepackage{amssymb,latexsym}
\usepackage{enumerate}
\usepackage{mathrsfs}
\makeatletter
\@namedef{subjclassname@2010}{%
  \textup{2010} Mathematics Subject Classification}
\makeatother

\ProvidesFile{ueuf.fd}
  [2002/01/19 v2.2g %
    AMS font definitions%
  ]
\DeclareFontFamily{U}{euf}{}
\DeclareFontShape{U}{euf}{m}{n}{%
  <5><6><7><8><9>gen*eufm%
  <10><10.95><12><14.4><17.28><20.74><24.88>eufm10%
  }{}
\DeclareFontShape{U}{euf}{b}{n}{%
  <5><6><7><8><9>gen*eufb%
  <10><10.95><12><14.4><17.28><20.74><24.88>eufb10%
  }{}

\ProvidesFile{umsb.fd}
  [2002/01/19 v2.2g %
    AMS font definitions%
  ]
\DeclareFontFamily{U}{msb}{}
\DeclareFontShape{U}{msb}{m}{n}{%
  <5><6><7><8><9>gen*msbm%
  <10><10.95><12><14.4><17.28><20.74><24.88>msbm10%
  }{}

\ProvidesFile{umsa.fd}
  [2002/01/19 v2.2g %
    AMS font definitions%
  ]
\DeclareFontFamily{U}{msa}{}
\DeclareFontShape{U}{msa}{m}{n}{%
  <5><6><7><8><9>gen*msam%
  <10><10.95><12><14.4><17.28><20.74><24.88>msam10%
  }{}

\newtheorem{theorem}{Theorem}[section]

\newtheorem{corollary}[theorem]{Corollary}

\theoremstyle{definition}

\newtheorem{remark}[theorem]{Remark}

\numberwithin{equation}{section}
\frenchspacing

\textwidth=13.5cm
\textheight=23cm
\parindent=16pt
\oddsidemargin=-0.5cm
\evensidemargin=-0.5cm
\topmargin=-0.5cm
%    Absolute value notation

%    Blank box placeholder for figures (to avoid requiring any
%    particular graphics capabilities for printing this document).

\begin{document}

\title[Gauss sums over some matrix groups] {Gauss sums over some matrix groups}

\author{Yan Li and Su Hu}

\address{Department of Applied Mathematics, China Agriculture University, Beijing 100083, China}
\email{liyan\_00@mails.tsinghua.edu.cn}
\address{Department of
Mathematics, Korea Advanced Institute of Science and Technology
(KAIST), 373-1 Guseong-dong, Yuseong-gu, Daejeon 305-701, South
Korea} \email{hus04@mails.tsinghua.edu.cn, husu@kaist.ac.kr}

\subjclass[2000]{11C20, 11T23} \keywords{General linear group;
Special linear group; Finite fields; Gauss sum; Kloosterman sum.}

\begin{abstract}In this note, we give explicit expressions of Gauss sums
for general (resp. special) linear groups over finite fields, which
involves Gauss sums (resp. Kloosterman sums). The key ingredient is
averaging such sums over Borel subgroups. As  applications, we count
the number of invertible matrices of zero-trace over finite fields
and we also improve two bounds by Ferguson, Hoffman, Luca, Ostafe
and Shparlinski in [ Some additive combinatorics problems in matrix
rings, Rev. Mat. Complut. (23) 2010, 501--513 ].
\end{abstract}

\maketitle

\def\C{\mathbb C_p}
\def\BZ{\mathbb Z}
\def\Z{\mathbb Z_p}
\def\Q{\mathbb Q_p}
\def\C{\mathbb C_p}
\def\BZ{\mathbb Z}
\def\Z{\mathbb Z_p}
\def\Q{\mathbb Q_p}
\def\psum{\sideset{}{^{(p)}}\sum}
\def\pprod{\sideset{}{^{(p)}}\prod}

\section{Introduction}
The Gauss sums for classical groups over a finite field have been
extensively studied by  Kim in several articles ~\cite{Kim,
Kim1,Kim2, Kim3, Kim4,Kim5,Kim6,Kim7, Kim8, Kim9, Kim10, Kim11,
Kim12, Kim13}, and more recently, he also applied the Gauss sum for
special linear groups over finite fields to coding
theory~\cite{Kim14}.

 Let $q$ be a
power of a prime number $p$ and $k=\mathbb{F}_{q}$ be the finite
field with $q$ elements. Let
 $\lambda$
be a fixed nonprincipal additive character of $\mathbb{F}_{q}$, e.g,
take $$\lambda(x)={\rm exp}\Big(\frac{2\pi i}{p}{\rm
tr}_{\mathbb{F}_{q}/\mathbb{F}_{p}}(x)\Big),\ \ \forall\ x\in
\mathbb{F}_{q},$$ and $\chi$ be a multiplicative character of
$\mathbb{F}_{q}^{*}$.

Given two matrices $U=(u_{ij}),V=(v_{ij})\in {\rm M}_{n}(k)$, their
product is defined by
\begin{equation}\begin{aligned}\label{f0}U\cdot V
= \sum_{\substack{1\leq i\leq n \\
1\leq j\leq n}}u_{ij}v_{ij}.\end{aligned}\end{equation}

For $U$ being a nonzero matrix of ${\rm M}_{n}(k)$, let
\begin{equation}\label{f1}\begin{aligned}G_{{\rm GL}_{n}(k)}(U,\chi,\lambda)
&= \sum_{X\in{\rm GL}_{n}(k)}\chi\left(\det X\right)\lambda(U \cdot
X ),\end{aligned}\end{equation}
and\begin{equation}\label{f2}\begin{aligned}G_{{\rm
SL}_{n}(k)}(U,\lambda) &=
\sum_{X\in\textrm{SL}_{n}(\mathbb{F}_{q})}\lambda(U \cdot X ).
\end{aligned}\end{equation} These sums can be viewed as the general linear group
and special linear group analogues of classical Gauss sums. For
brevity, if $\chi=1$ is trivial, we will write $G_{{\rm
GL}_{n}(k)}(U,\lambda)$ instead of $G_{{\rm
GL}_{n}(k)}(U,1,\lambda)$.

Kim \cite{Kim} got the formulae of $G_{{\rm
GL}_{n}(k)}(I,\chi,\lambda)$ and $G_{{\rm SL}_{n}(k)}(I,\lambda)$ by
using the Bruhat decomposition of ${\rm GL}_{n}(k)$ and
$\textrm{SL}_{n}(k)$. As Kim remarked, these formulae already
appeared in the work of Eichler \cite{Eichler} and Lamprecht
\cite{Lamprecht}.(See the introduction of \cite{Kim}). Fulman
\cite{Fulman} also got the same result for $G_{{\rm
GL}_{n}(k)}(I,\chi,\lambda)$ by using the technique of generating
functions.

In this note, by using orthogonality of characters of finite abelian
groups, we present an explicit expressions for the sums (\ref{f1})
and (\ref{f2}). The main idea in our approach is averaging the sums
(\ref{f1}) and (\ref{f2}) over Borel subgroups, i.e, the group of
upper triangular matrices. (See Theorems \ref{thm1} and \ref{thm2}
below). As a consequence, we give the upper bounds of the sums
(\ref{f1}) and (\ref{f2}) (in the case $\chi=1$). (See Corollaries
\ref{cor1} and \ref{cor2} below).

By using several results from algebraic geometry, in particular,
Skorobogatov's estimates of character sums along algebraic
varieties~\cite{Skoro}, Ferguson, Hoffman, Luca, Ostafe and
Shparlinski~\cite{Sh} also  provided another upper bounds of the
sums (\ref{f1}) and (\ref{f2}) and used them to study some additive
combinatorics problems in matrix rings. Their bounds have been
applied by us to study some uniform distribution properties of some
matrix groups. (See \cite{HL}). Our bounds given in this note
improve their bounds. (See Remarks \ref{rem1} and \ref{rem2} below).

Finally, as an application, we count the number of invertible
matrices of zero-trace over finite fields.

\section{Main results}
\subsection{The case of ${\rm GL}_{n}(k)$}The equation (\ref{f1}) can
be rewritten as \begin{equation}\label{f3}\begin{aligned}G_{{\rm
GL}_{n}(k)}(U,\chi,\lambda) &= \sum_{X\in{\rm
GL}_{n}(k)}\chi\left(\det X\right)\lambda({\rm tr}\ U^tX
),\end{aligned}\end{equation} where $U^t$ is the transpose of $U$
and ``${\rm tr}$'' stands for the trace of the matrix.

Replacing $U$ by $PUQ$ in (\ref{f3}) with $P,Q\in {\rm GL}_{n}(k)$,
we get
\begin{equation}\label{f4}\begin{aligned}&\ \ \ G_{{\rm
GL}_{n}(k)}(PUQ,\chi,\lambda)
\\=&\sum_{X\in{\rm GL}_{n}(k)}\chi\left(\det X\right)\lambda({\rm
tr}\ Q^tU^tP^tX )\\=&\sum_{X\in{\rm GL}_{n}(k)}\chi\left(\det
X\right)\lambda({\rm tr}\ U^tP^tXQ^t)\\=&\bar{\chi}\left(\det
PQ\right)\sum_{X\in{\rm GL}_{n}(k)}\chi\left(\det
P^tXQ^t\right)\lambda({\rm tr}\ U^tP^tXQ^t)\\=&\bar{\chi}\left(\det
PQ\right)G_{{\rm GL}_{n}(k)}(U,\chi,\lambda)
.\end{aligned}\end{equation} Therefore,
\begin{equation}\label{f5}\begin{aligned} G_{{\rm GL}_{n}(k)}(U,\chi,\lambda)=\chi
\left(\det PQ\right)G_{{\rm GL}_{n}(k)}(PUQ,\chi,\lambda).
\end{aligned}\end{equation}

Let $u$ be the rank of $U$. There exist $P,Q\in {\rm GL}_{n}(k)$
such that
\begin{equation}\label{f6}\begin{aligned}
PUQ=\left(
      \begin{array}{cc}
        I_u & 0 \\
        0 & 0 \\
      \end{array}
    \right)
,\end{aligned}\end{equation} where $I_u$ is the $u\times u$ identity
matrix. If $u<n$, additionally, we can also require $P,Q\in {\rm
SL}_{n}(k)$.

Combining equations (\ref{f5}) and (\ref{f6}), we get
\begin{equation}\label{f7}\begin{aligned}
 \ \ \ \ \ G_{{\rm
GL}_{n}(k)}(U,\chi,\lambda)=&\left\{
\begin{array}{llll} \displaystyle\bar{\chi}(\det U)\sum_{X\in{\rm
GL}_{n}(k)}\chi\left(\det X\right)\lambda({\rm tr}_{u}X) &\ &\mathrm{if}\ u=n\\
\displaystyle\sum_{X\in{\rm GL}_{n}(k)}\chi\left(\det
X\right)\lambda({\rm tr}_{u}X)&\ &\mathrm{if}\ u<n
\end{array}\right.
,
\end{aligned}\end{equation}
where
\begin{equation}\label{f8}\begin{aligned} {\rm tr}_{u}X=\sum_{i=1}^{u}x_{ii},\ {\rm for}\ X=(x_{ij})\in {\rm M}_{n}(k).
\end{aligned}\end{equation}
So it suffices to calculating the sum
\begin{equation}\label{f9}\begin{aligned}\sum_{X\in{\rm
GL}_{n}(k)}\chi\left(\det X\right)\lambda({\rm tr}_{u}X),\ {\rm
for}\ 1\leq u\leq n.
\end{aligned}\end{equation}

Let ${\rm B}_{n}(k)$ be the Borel subgroup of ${\rm GL}_{n}(k)$,
i.e., the group of upper triangular invertible matrices. The
following is the key step of our approach.

\begin{equation}\label{f10}\begin{aligned}&\ \ \sum_{X\in{\rm
GL}_{n}(k)}\chi\left(\det X\right)\lambda({\rm
tr}_{u}X)\\=&\frac{1}{(q-1)^nq^{\left( n\atop2 \right) }}\sum_{B\in
{\rm B}_{n}(k)}\sum_{X\in{\rm GL}_{n}(k)}\chi\left(\det
BX\right)\lambda({\rm tr}_{u}BX)
\\=&\sum_{X\in{\rm
GL}_{n}(k)}\frac{\chi\left(\det X\right)}{(q-1)^nq^{\left( n\atop2
\right) }}\sum_{B\in {\rm B}_{n}(k)}\chi\left(\det
B\right)\lambda({\rm tr}_{u}BX)
\\=&\sum_{\left(x_{ij}\right)\in{\rm
GL}_{n}(k)}\frac{\chi\left(\det
\left(x_{ij}\right)\right)}{(q-1)^nq^{\left( n\atop2 \right)
}}\sum_{\left(b_{ij}\right)\in {\rm
B}_{n}(k)}\chi\left(\prod_{i=1}^{n}b_{ii}\right)\lambda(\sum_{\substack{i\leq
j\\ i\leq u}}b_{ij}x_{ji})
\\=&\sum_{\left(x_{ij}\right)\in{\rm
GL}_{n}(k)}\frac{\chi\left(\det
\left(x_{ij}\right)\right)}{(q-1)^nq^{\left( n\atop2 \right)
}}\prod_{i=1}^{u}\sum_{b_{ii}\neq
0}\chi\left(b_{ii}\right)\lambda(b_{ii}x_{ii})\cdot\prod_{i=u+1}^{n}\sum_{b_{ii}\neq
0}\chi\left(b_{ii}\right)
\\&\ \ \ \ \ \ \ \ \ \ \cdot\prod_{\substack{i<j\\ i\leq
u}}\sum_{b_{ij}\in
k}\lambda(b_{ij}x_{ji})\cdot\prod_{\substack{i<j\\
i> u}}\sum_{b_{ij}\in k}1.
\end{aligned}\end{equation}

If $\chi$ is not principal and $u<n$, then we have
\begin{equation}\label{f111}\sum_{X\in{\rm
GL}_{n}(k)}\chi\left(\det X\right)\lambda({\rm
tr}_{u}X)=0\end{equation}
 because
\begin{equation*}\sum_{b_{ii}\neq 0}\chi\left(b_{ii}\right)=0,\ {\rm for}\ u+1\leq i\leq n.
\end{equation*}

So we only need to consider the remaining two cases: $u=n$ or
$\chi=1$.

Firstly, assume $u=n$. The sum (\ref{f10}) equals to
\begin{equation}\label{f11}\begin{aligned}
\sum_{\left(x_{ij}\right)\in{\rm GL}_{n}(k)}\frac{\chi\left(\det
\left(x_{ij}\right)\right)}{(q-1)^nq^{\left( n\atop2 \right)
}}\prod_{i=1}^{n}\sum_{b_{ii}\neq
0}\chi\left(b_{ii}\right)\lambda(b_{ii}x_{ii})
\cdot\prod_{i<j}\sum_{b_{ij}\in k}\lambda(b_{ij}x_{ji}).
\end{aligned}\end{equation}
For $i<j$,
\begin{equation*}\begin{aligned}
\sum_{b_{ij}\in k}\lambda(b_{ij}x_{ji})\neq0\ {\rm if\ and\ only\
if}\ x_{ji}=0.
\end{aligned}\end{equation*}
Therefore, (\ref{f11}) equals to
\begin{equation}\label{f12}\begin{aligned}
\ \ &\sum_{\left(x_{ij}\right)\in{\rm
B}_{n}(k)}\frac{1}{(q-1)^n}\prod_{i=1}^{n}\sum_{b_{ii}\neq
0}\chi\left(b_{ii}x_{ii}\right)\lambda(b_{ii}x_{ii})
\end{aligned}\end{equation}
So we get
\begin{equation}\label{f13}\begin{aligned}
\sum_{X\in{\rm GL}_{n}(k)}\chi\left(\det X\right)\lambda({\rm
tr}_{u}X)=q^{\left( n\atop2 \right)}G(\chi,\lambda)^n,
\end{aligned}\end{equation}where
\begin{equation}\label{f14}\begin{aligned}
G(\chi,\lambda)=\sum_{x\in k^*}\chi(x)\lambda(x)
\end{aligned}\end{equation} is the classical Gauss sum for $k=\mathbb{F}_q$.

Secondly, assume $\chi$ is principal. The sum (\ref{f10}) equals to
\begin{equation}\label{f15}\begin{aligned}\sum_{\left(x_{ij}\right)
\in{\rm GL}_{n}(k)}\frac{1}{(q-1)^uq^{\left( n\atop2 \right) -\left(
n-u\atop2 \right)}}\prod_{i=1}^{u}\sum_{b_{ii}\neq
0}\lambda(b_{ii}x_{ii})\cdot\prod_{\substack{i<j\\ i\leq
u}}\sum_{b_{ij}\in k}\lambda(b_{ij}x_{ji}).
\end{aligned}\end{equation}

The terms in the summation (\ref{f15}) over
$X=\left(x_{ij}\right)\in{\rm GL}_{n}(k)$ are nonzero if and only if
\begin{equation}\label{f16}\begin{aligned}
X=\left(
      \begin{array}{cc}
        A & B \\
        0 & D \\
      \end{array}
    \right)
,\ {\rm for}\ A\in {\rm B}_{u}(k)\ {\rm and}\ D\in{\rm
GL}_{n-u}(k).\end{aligned}\end{equation} In that case, they all
equal to
\begin{equation*}\begin{aligned}\frac{(-1)^u}{(q-1)^{u}}.
\end{aligned}\end{equation*}
Therefore, \begin{equation}\label{f17}\begin{aligned} &
\sum_{X\in{\rm GL}_{n}(k)}\chi\left(\det X\right)\lambda({\rm
tr}_{u}X)\\=&\frac{(-1)^u}{(q-1)^{u}}\#\{(A,B,D)|A\in {\rm
B}_{u}(k),\ B\in {\rm M}_{u\times (n-u)}(k),\  D\in{\rm
GL}_{n-u}(k)\}\\=&\frac{(-1)^u}{(q-1)^{u}}\left((q-1)^{u}q^{\left(
n\atop2 \right)-\left( n-u\atop2
\right)}\right)\cdot\prod_{i=0}^{n-u-1}\left(q^{n-u}-q^i\right)
\\=&q^{\left( n\atop2
\right)}(-1)^u\prod_{i=1}^{n-u}\left(q^{i}-1\right).
\end{aligned}\end{equation}
Putting equations $(\ref{f7})$, $(\ref{f111})$, $(\ref{f13})$ and
$(\ref{f17})$ all together, we get
\begin{theorem}\label{thm1}Let $u$ be the rank of $U$ and $\lambda$ be nontrivial. Then\begin{equation*}\begin{aligned}
 \ \ \ \ \ G_{{\rm
GL}_{n}(k)}(U,\chi,\lambda)=&\left\{
\begin{array}{llll} \displaystyle\bar{\chi}(\det U)q^{\left( n\atop2 \right)}G(\chi,\lambda)^n &\ &\mathrm{if}\ u=n,\\
\displaystyle(-1)^uq^{\left( n\atop2
\right)}\prod_{i=1}^{n-u}\left(q^{i}-1\right)&\ &\mathrm{if}\
\chi=1,\\ \displaystyle0&\ &\mathrm{if}\ u<n,\ \chi\neq1,
\end{array}\right.
\end{aligned}\end{equation*}
where $G(\chi,\lambda)$ is the classical Gauss sum defined in
$(\ref{f14})$.
\end{theorem}
Letting $u=1$ in Theorem \ref{thm1}, we get the following corollary.
\begin{corollary}\label{cor1} Uniformly over all nonzero matrices
$U\in\textrm{M}_{n}(\mathbb{F}_{q})$ and nontrivial additive
characters $\lambda$, we have
$$G_{{\rm
GL}_{n}(k)}(U,\lambda)= O(q^{n^{2}-n}),$$ where the implied constant
in the symbol $``O"$ depends only on $n$.
\end{corollary}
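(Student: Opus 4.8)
The plan is to derive the bound directly from Theorem \ref{thm1} specialized to $\chi = 1$. When $\chi$ is trivial, Theorem \ref{thm1} gives a closed-form expression for $G_{{\rm GL}_n(k)}(U,\lambda)$ that depends only on the rank $u$ of $U$: if $u = n$ it equals $q^{\binom{n}{2}} G(1,\lambda)^n$, and if $u < n$ it equals $(-1)^u q^{\binom{n}{2}} \prod_{i=1}^{n-u}(q^i - 1)$. So the task reduces to bounding each of these two expressions by $O(q^{n^2 - n})$ with an implied constant depending only on $n$, uniformly in $U$ (i.e.\ uniformly in $u \in \{1, \dots, n\}$) and in the nontrivial character $\lambda$.

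First I would handle the full-rank case $u = n$. Here the trivial Gauss sum satisfies $G(1,\lambda) = \sum_{x \in k^*}\lambda(x) = -1$ because $\sum_{x\in k}\lambda(x) = 0$ for nontrivial $\lambda$; hence $G_{{\rm GL}_n(k)}(U,\lambda) = (-1)^n q^{\binom{n}{2}}$, whose absolute value is exactly $q^{\binom{n}{2}}$. Since $\binom{n}{2} = \tfrac{n(n-1)}{2} \le n^2 - n$ for all $n \ge 1$, this is $O(q^{n^2-n})$ with constant $1$. Next, for the case $1 \le u \le n-1$, I would bound
\begin{equation*}
\left| (-1)^u q^{\binom{n}{2}} \prod_{i=1}^{n-u}(q^i - 1) \right| \le q^{\binom{n}{2}} \prod_{i=1}^{n-u} q^i = q^{\binom{n}{2}} \cdot q^{\binom{n-u+1}{2}} = q^{\binom{n}{2} + \binom{n-u+1}{2}}.
\end{equation*}
The exponent is maximized over $u \in \{1,\dots,n-1\}$ at $u = 1$, giving $\binom{n}{2} + \binom{n}{2} = n(n-1) = n^2 - n$. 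Therefore in every case $|G_{{\rm GL}_n(k)}(U,\lambda)| \le q^{n^2 - n}$, which is the claimed bound with implied constant $1$ (or one may absorb the $u=n$ versus $u<n$ comparison into a constant depending on $n$ if one prefers not to track the exponents so tightly).

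There is essentially no obstacle here: the corollary is a routine size estimate once Theorem \ref{thm1} is in hand, and the only mild points of care are (i) evaluating the trivial Gauss sum as $-1$ in the full-rank case rather than leaving $G(1,\lambda)$ abstract, and (ii) checking that the worst exponent among the rank-deficient cases is the one at $u=1$, which is exactly the specialization mentioned in the sentence preceding the corollary. The uniformity in $U$ is automatic because the bound depends on $U$ only through its rank, and the uniformity in $\lambda$ is automatic because $|G(1,\lambda)| = 1$ for every nontrivial $\lambda$.
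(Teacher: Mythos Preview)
Your proof is correct and follows the same approach as the paper, which simply notes that the corollary follows by letting $u=1$ in Theorem \ref{thm1}. You have spelled out in full detail what the paper leaves implicit: that the formula in Theorem \ref{thm1} for $\chi=1$ is maximized in absolute value at $u=1$, and that the full-rank case $u=n$ (where the trivial Gauss sum evaluates to $-1$) gives only $q^{\binom{n}{2}}$, which is smaller.
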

\begin{remark}\label{rem1} Ferguson, Hoffman, Luca, Ostafe
and Shparlinski in \cite{Sh} obtained $G_{{\rm
GL}_{n}(k)}(U,\lambda)=O(q^{n^{2}-5/2})$. (See Lemma 3 of
\cite{Sh}). Corollary \ref{cor1} improves their bounds. Moreover,
from our proof, it is easily seen that the bound $O(q^{n^{2}-n})$
can not be improved. So Lemma 3 of \cite{Sh} should be stengthed as
$$G_{{\rm GL}_{n}(k)}(U,\lambda)=O(q^{n^{2}-5/2}),\ {\rm for}\ \
n>2.$$
\end{remark}

\subsection{The case of ${\rm SL}_{n}(k)$}

If $P,Q\in {\rm SL}_{n}(k)$, the same argument as (\ref{f4}) shows
that
\begin{equation}\label{s1}\begin{aligned} G_{{\rm SL}_{n}(k)}(U,\lambda)=
G_{{\rm SL}_{n}(k)}(PUQ,\lambda).
\end{aligned}\end{equation}

If $u<n$, from (\ref{f6}), we can assume $P,Q\in {\rm SL}_{n}(k)$,
then
\begin{equation}\label{s2}\begin{aligned} G_{{\rm SL}_{n}(k)}(U,\lambda)=
\sum_{X\in{\rm SL}_{n}(k)}\lambda({\rm tr}_{u}X).
\end{aligned}\end{equation}
Let $D_h\in {\rm GL}_{n}(k)$ be the diagonal matrix ${\rm
Diag}(1,1,\ldots,1,h)$, where $h\in k^*$. Every element $Y$ of ${\rm
GL}_{n}(k)$ can be uniquely written as $Y=D_hX$ with $X\in {\rm
SL}_{n}(k)$ and $h=\det X$. So
\begin{equation*}\label{s4}\begin{aligned}
\sum_{X\in{\rm SL}_{n}(k)}\lambda({\rm tr}_{u}X)=\sum_{X\in{\rm
SL}_{n}(k)}\frac{1}{q-1}\sum_{h\neq 0}\lambda({\rm
tr}_{u}D_{h}X)=\frac{1}{q-1}\sum_{Y\in{\rm GL}_{n}(k)}\lambda({\rm
tr}_{u}Y).
\end{aligned}\end{equation*}
Therefore, from Theorem \ref{thm1}, we get
\begin{equation}\label{s5}\begin{aligned} G_{{\rm
SL}_{n}(k)}(U,\lambda)=\frac{1}{q-1} G_{{\rm
GL}_{n}(k)}(U,1,\lambda)=\displaystyle(-1)^uq^{\left( n\atop2
\right)}\prod_{i=2}^{n-u}\left(q^{i}-1\right).
\end{aligned}\end{equation}

Now we consider the case $u=n$. Let $\widetilde{{\rm B}}_n(k)$ be
the Borel subgroup of ${\rm SL}_{n}(k)$, i.e., the group of upper
triangular matrices with determinate $1$. Using the same method as
in the case of ${\rm GL}_{n}(k)$, we get
\begin{equation}\label{s7}\begin{aligned}&\ \ \ G_{{\rm SL}_{n}(k)}(U,\lambda)\\ =&\sum_{X\in{\rm
SL}_{n}(k)}\lambda({\rm tr}\ U^tX )\\=&\sum_{\det X=\det
U}\lambda({\rm tr}\ X )\\=&\frac{1}{(q-1)^{n-1}q^{\left( n\atop2
\right) }}\sum_{B\in \widetilde{{\rm B}}_n(k)}\sum_{\det X=\det
U}\lambda({\rm tr}\ BX )
\\=&\sum_{\det X=\det
U}\frac{1}{(q-1)^{n-1}q^{\left( n\atop2 \right) }}\sum_{B\in
\widetilde{{\rm B}}_n(k)}\lambda({\rm tr}\ BX )
\\=&\sum_{\det (x_{ij})=\det
U}\frac{1}{(q-1)^{n-1}q^{\left( n\atop2 \right) }}\sum_{(b_{ij})\in
\widetilde{{\rm B}}_n(k)}\lambda(\sum_{i\leq j}b_{ij}x_{ji})
\\=&\sum_{\det (x_{ij})=\det
U}\frac{1}{(q-1)^{n-1}q^{\left( n\atop2 \right) }}
\sum_{b_{11}\cdots b_{nn}=1}\lambda(\sum_{i=1}^{n}b_{ii}x_{ii})
\cdot\prod_{i<j}\sum_{b_{ij}\in k}\lambda(b_{ij}x_{ji})
\\=&\sum_{\substack{(x_{ij})\in {\rm B}_n(k)\\ x_{11}\cdots x_{nn}=\det
U}}\frac{1}{(q-1)^{n-1}} \sum_{b_{11}\cdots
b_{nn}=1}\lambda(\sum_{i=1}^{n}b_{ii}x_{ii})
\\=& q^{\left( n\atop2
\right)}K_n(\lambda,\det U),
\end{aligned}\end{equation}where
\begin{equation}\label{s8}\begin{aligned}
K_n(\lambda,y)=\sum_{x_{1}x_{2}\cdots
x_{n}=y}\lambda(x_1+x_2+\cdots+x_n),\ {\rm for}\ y\in k^*,
\end{aligned}\end{equation} is the Kloosterman sum for $k=\mathbb{F}_q$.

Summing up, we get
\begin{theorem}\label{thm2}Let $u$ be the rank of $U$ and $\lambda$ be nontrivial. Then\begin{equation*}\begin{aligned}
 \ \ \ \ \ G_{{\rm SL}_{n}(k)}(U,\lambda)=&\left\{
\begin{array}{llll} \displaystyle q^{\left( n\atop2
\right)}K_n(\lambda,\det U) &\ &\mathrm{if}\ u=n,\\
\displaystyle(-1)^uq^{\left( n\atop2
\right)}\prod_{i=2}^{n-u}\left(q^{i}-1\right)&\ &\mathrm{if}\ u<n.
\end{array}\right.
\end{aligned}\end{equation*}
\end{theorem}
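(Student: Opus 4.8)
The plan is to reduce everything to Theorem \ref{thm1} in the low-rank case and to a direct averaging computation in the full-rank case, exactly paralleling the ${\rm GL}_{n}$ treatment. First I would dispose of the case $u<n$. Since $u<n$, equation (\ref{f6}) lets us choose $P,Q\in {\rm SL}_{n}(k)$ with $PUQ$ equal to the standard rank-$u$ projector; by the invariance (\ref{s1}) this gives $G_{{\rm SL}_{n}(k)}(U,\lambda)=\sum_{X\in{\rm SL}_{n}(k)}\lambda({\rm tr}_{u}X)$ as in (\ref{s2}). Then I would use the factorization $Y=D_hX$ of an arbitrary $Y\in{\rm GL}_{n}(k)$ with $h=\det Y$ and $X\in{\rm SL}_{n}(k)$: since ${\rm tr}_{u}D_hX={\rm tr}_{u}X$ when $u<n$ (the bottom-right entry that $D_h$ scales does not appear in ${\rm tr}_{u}$), averaging over $h\in k^{*}$ yields $(q-1)\sum_{X\in{\rm SL}_{n}(k)}\lambda({\rm tr}_{u}X)=\sum_{Y\in{\rm GL}_{n}(k)}\lambda({\rm tr}_{u}Y)=G_{{\rm GL}_{n}(k)}(U,1,\lambda)$. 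Plugging in the $\chi=1$ value from Theorem \ref{thm1}, namely $(-1)^{u}q^{\binom{n}{2}}\prod_{i=1}^{n-u}(q^{i}-1)$, and dividing by $q-1=q^{1}-1$ gives the claimed $(-1)^{u}q^{\binom{n}{2}}\prod_{i=2}^{n-u}(q^{i}-1)$.

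Next I would handle the case $u=n$. Here $\det U\neq 0$, and by (\ref{s1}) one may first reduce to matrices $X$ with $\det X=\det U$ and compute $\sum_{\det X=\det U}\lambda({\rm tr}\,X)$. The key device is averaging over $\widetilde{{\rm B}}_{n}(k)$, the Borel subgroup of ${\rm SL}_{n}(k)$, which has order $(q-1)^{n-1}q^{\binom{n}{2}}$: since right multiplication by a fixed $B\in\widetilde{{\rm B}}_n(k)$ permutes the fiber $\{\det X=\det U\}$, we may insert the normalized sum $\frac{1}{|\widetilde{{\rm B}}_n(k)|}\sum_{B\in\widetilde{{\rm B}}_n(k)}$ in front of $\lambda({\rm tr}\,BX)$ without changing the value, then swap the order of summation. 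Writing $B=(b_{ij})$ with $b_{ij}=0$ for $i>j$ and $\prod_i b_{ii}=1$, the exponent ${\rm tr}\,BX$ becomes $\sum_{i\le j}b_{ij}x_{ji}$, so the off-diagonal sums $\sum_{b_{ij}\in k}\lambda(b_{ij}x_{ji})$ factor out and vanish unless $x_{ji}=0$ for all $i<j$ — forcing $X$ itself to be upper triangular. For such $X\in{\rm B}_n(k)$ with $x_{11}\cdots x_{nn}=\det U$, the surviving inner sum is $\sum_{b_{11}\cdots b_{nn}=1}\lambda(\sum_i b_{ii}x_{ii})$, which after the substitution $c_i=b_{ii}x_{ii}$ becomes exactly the Kloosterman sum $K_n(\lambda,\det U)$ from (\ref{s8}). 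Counting the free parameters (the $\binom{n}{2}$ entries $x_{ij}$ with $i<j$, times the appropriately normalized diagonal contribution) collapses the prefactor $\frac{1}{(q-1)^{n-1}q^{\binom{n}{2}}}$ to leave $q^{\binom{n}{2}}K_n(\lambda,\det U)$; this is displayed line by line in (\ref{s7}).

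The step I expect to require the most care is verifying that the bookkeeping in the $u=n$ case produces precisely the factor $q^{\binom{n}{2}}$ and no stray power of $q-1$: one must check that summing over the $\binom{n}{2}$ strictly-upper entries $x_{ij}$ (each ranging over all of $k$, after the delta constraints are used) contributes $q^{\binom{n}{2}}$, that the constraint $x_{11}\cdots x_{nn}=\det U$ together with $b_{11}\cdots b_{nn}=1$ reorganizes cleanly under $c_i=b_{ii}x_{ii}$ into the single constraint $c_1\cdots c_n=\det U$, and that the remaining normalization $1/(q-1)^{n-1}$ is exactly absorbed. The $u<n$ case is essentially immediate once the $D_h$ trick is observed. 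Combining (\ref{s5}) with (\ref{s7}) gives the theorem.
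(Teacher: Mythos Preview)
Your proposal is correct and follows the paper's own argument essentially step for step: the $u<n$ case is reduced to Theorem~\ref{thm1} via the $Y=D_hX$ averaging exactly as in (\ref{s5}), and the $u=n$ case is the Borel-averaging computation displayed in (\ref{s7}), including the substitution $c_i=b_{ii}x_{ii}$ and the bookkeeping you flag. Two cosmetic slips worth fixing: the passage from $\sum_{X\in{\rm SL}_n(k)}\lambda({\rm tr}\,U^tX)$ to $\sum_{\det X=\det U}\lambda({\rm tr}\,X)$ is just the substitution $X\mapsto U^tX$ rather than an application of (\ref{s1}), and the averaging uses \emph{left} multiplication $X\mapsto BX$, not right.
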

\begin{corollary}\label{cor2} Uniformly over all nonzero matrices
$U\in\textrm{M}_{n}(\mathbb{F}_{q})$ and nontrivial additive
characters $\lambda$, we have
\begin{equation*}\begin{aligned}
 \ \ \ \ \ G_{{\rm SL}_{n}(k)}(U,\lambda)=&\left\{
\begin{array}{llll}O(1) &\ &\mathrm{if}\ n=
1,\\ O(q^{\frac{3}{2}}) &\ &\mathrm{if}\ n= 2,\\O(q^{n^2-n-1}) &\
&\mathrm{if}\ n\geq 3.
\end{array}\right.
\end{aligned}\end{equation*} where the implied constant
in the symbol $``O"$ depends only on $n$.
\end{corollary}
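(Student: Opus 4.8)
The plan is to derive everything from Theorem~\ref{thm2}: the right-hand side there is completely explicit, so the corollary reduces to estimating the two pieces that appear, namely the $n$-fold Kloosterman sum $K_n(\lambda,\det U)$ in the case $u=n$ and the product $\prod_{i=2}^{n-u}(q^i-1)$ in the case $u<n$. First I would dispose of $n=1$: since $\mathrm{SL}_1(k)$ is the trivial group, $G_{\mathrm{SL}_1(k)}(U,\lambda)=\lambda(U)$ has absolute value $1$; equivalently, in Theorem~\ref{thm2} one has $u=n=1$, $\binom{1}{2}=0$, and $K_1(\lambda,\det U)=\lambda(\det U)$, giving $O(1)$.

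For $n\ge 2$ I would split according to the rank $u$ of $U$, noting $u\ge 1$ because $U\neq 0$. \textbf{Case $u<n$.} Theorem~\ref{thm2} gives the closed form $(-1)^u q^{\binom{n}{2}}\prod_{i=2}^{n-u}(q^i-1)$, whence
\[
\bigl|G_{\mathrm{SL}_n(k)}(U,\lambda)\bigr|\le q^{\binom{n}{2}}\prod_{i=2}^{n-u}q^{i}=q^{\,\binom{n}{2}+\frac{(n-u)(n-u+1)}{2}-1}.
\]
Since $u\ge 1$ we have $(n-u)(n-u+1)\le n(n-1)$, so the exponent is at most $2\binom{n}{2}-1=n^2-n-1$; thus this term is $O(q^{n^2-n-1})$, and in particular $O(q^{3/2})$ when $n=2$ (there the product is empty and the term is simply $-q$).

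\textbf{Case $u=n$.} Here Theorem~\ref{thm2} gives $q^{\binom{n}{2}}K_n(\lambda,\det U)$, with $K_n$ the $n$-variable Kloosterman sum~$(\ref{s8})$. For $n=2$ this is the classical Kloosterman sum and Weil's bound gives $|K_2(\lambda,\det U)|\le 2q^{1/2}$, so the term is $O(q^{1+1/2})=O(q^{3/2})$. For general $n$, Deligne's estimate for hyper-Kloosterman sums gives $|K_n(\lambda,\det U)|\le n\,q^{(n-1)/2}$, hence
\[
\bigl|G_{\mathrm{SL}_n(k)}(U,\lambda)\bigr|\le n\,q^{\binom{n}{2}+\frac{n-1}{2}}=n\,q^{\frac{n^2-1}{2}}.
\]
For $n\ge 3$ one has $\tfrac{n^2-1}{2}\le n^2-n-1$ (this is just $(n-1)^2\ge 2$), so this term too is $O(q^{n^2-n-1})$. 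Assembling the two cases in each range of $n$ yields the three stated bounds.

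The only input beyond bookkeeping with the formula of Theorem~\ref{thm2} is the bound $|K_n(\lambda,y)|\le n\,q^{(n-1)/2}$ for the $n$-fold Kloosterman sum (the Weil bound when $n=2$, Deligne's theorem in general); so I expect the ``main obstacle'' to be nothing more than quoting the correct form of that estimate and checking the elementary exponent inequalities $(n-u)(n-u+1)\le n(n-1)$ and $\tfrac{n^2-1}{2}\le n^2-n-1$ ($n\ge 3$). There is no deeper difficulty. One may also remark, as in Remark~\ref{rem1}, that the $u=1$ subcase shows the exponent $n^2-n-1$ is attained, so the bound for $n\ge 3$ is best possible.
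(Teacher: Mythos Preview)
Your proposal is correct and follows essentially the same approach as the paper: apply Theorem~\ref{thm2}, invoke Deligne's bound $K_n(\lambda,y)=O(q^{(n-1)/2})$ in the full-rank case, and compare the resulting exponent $\tfrac{n^2-1}{2}$ against $n^2-n-1$ coming from the $u<n$ case. The paper's argument is just a terser version of what you wrote, so there is nothing to add.
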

\begin{proof}The case: $n=1$ is trivial. So we assume $n\geq 2$.
From Delinge's bound of Kloosterman sum (see Example 2
in~\cite{kowalski}), we get $K_n(\lambda,y)=O(q^{\frac{n-1}{2}})$,
where $y\in k^*$. Therefore, by Theorem \ref{thm2}, we get
$$G_{{\rm
SL}_{n}(k)}(U,\lambda)=
O(\max\{q^{\frac{n^2-1}{2}},q^{n^{2}-n-1}\}).
$$Note that $n^{2}-n-1>(n^2-1)/2$ if and only if $n\geq 3$. So we get the desired
bound.
\end{proof}
\begin{remark}\label{rem2} Ferguson, Hoffman, Luca, Ostafe
and Shparlinski in \cite{Sh} obtained $G_{{\rm
SL}_{n}(k)}(U,\lambda)=O(q^{n^{2}-2})$. (See Lemma 4 of \cite{Sh}).
Corollary \ref{cor2} improves their bounds.
\end{remark}
\section{Counting Invertible matrices with given trace}
For $\beta\in k$, let
\begin{equation}\label{c1}\begin{aligned} N_{\beta}=\#\{X\in {\rm GL}_{n}(k)|\ {\rm tr}
X=\beta\}.
\end{aligned}\end{equation}

 The usual way of computing $N_{\beta}$ involves the Bruhat
 decomposition of ${\rm GL}_{n}(k)$, e.g, see
 \cite[Prop.\,1.10.15]{Stanley}. In this note, as an application of Theorem \ref{thm1}, we
 can calculate $N_{\beta}$ purely by the method of exponential sums.

 Since trace is a linear function, we get $N_{\beta}=N_{h\beta},$
 for $h,\beta\in k^*$. So $N_{h}=N_{1},$ for $h\in k^*$. Then, we have
\begin{equation}\label{c2}\begin{aligned}N_0+(q-1)N_1=\#{\rm
Gl}_n(k)=\prod_{i=0}^{n-1}(q^n-q^i)\end{aligned}
\end{equation}
and
 \begin{equation}\label{c3}\begin{aligned}G_{{\rm
GL}_{n}(k)}(I,\lambda) =& \sum_{X\in{\rm GL}_{n}(k)}\lambda({\rm
tr}\ X )\\=&N_{0}\lambda(0)+N_{1}\sum_{h\in
k^*}\lambda(h)=N_{0}-N_{1}.\end{aligned}\end{equation} Combining
Theorem \ref{thm1}, equations (\ref{c2}) and (\ref{c3}), we get
\begin{theorem}\label{thm3} Let $h\in k^*$. Then
\begin{equation*}\begin{aligned}N_0=&q^{\left( n\atop2
\right)}(q-1)\left((-1)^n+\prod_{i=2}^{n}\left(q^{i}-1\right)\right)/q,\\N_h=&q^{\left(
n\atop2
\right)}(q-1)\left((-1)^{n-1}/(q-1)+\prod_{i=2}^{n}\left(q^{i}-1\right)\right)/q.
\end{aligned}\end{equation*}
\end{theorem}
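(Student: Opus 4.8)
The plan is to derive $N_0$ and $N_h$ by solving the linear system formed by the two scalar relations already assembled in the excerpt, namely the counting identity \eqref{c2} and the Gauss-sum identity \eqref{c3}, and then to feed in the value of $G_{{\rm GL}_n(k)}(I,\lambda)$ supplied by Theorem~\ref{thm1}. First I would observe that since $I$ has rank $u=n$ and $\det I=1$, Theorem~\ref{thm1} (with $\chi=1$, so that $G(\chi,\lambda)=\sum_{x\in k^*}\lambda(x)=-1$) gives
\begin{equation*}\begin{aligned}G_{{\rm GL}_n(k)}(I,\lambda)=q^{\left(n\atop 2\right)}(-1)^n.\end{aligned}\end{equation*}
Combining this with \eqref{c3} yields $N_0-N_1=q^{\left(n\atop 2\right)}(-1)^n$, while \eqref{c2} gives $N_0+(q-1)N_1=\prod_{i=0}^{n-1}(q^n-q^i)$.

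Next I would solve this $2\times 2$ system. Subtracting the first relation from the second gives $qN_1=\prod_{i=0}^{n-1}(q^n-q^i)-q^{\left(n\atop 2\right)}(-1)^n$, hence
\begin{equation*}\begin{aligned}N_1=\frac{1}{q}\left(\prod_{i=0}^{n-1}(q^n-q^i)-q^{\left(n\atop 2\right)}(-1)^n\right),\end{aligned}\end{equation*}
and then $N_0=N_1+q^{\left(n\atop 2\right)}(-1)^n$. The only remaining task is to recognize $\prod_{i=0}^{n-1}(q^n-q^i)$ as $q^{\left(n\atop 2\right)}(q-1)\prod_{i=2}^{n}(q^i-1)$: pulling out a factor $q^i$ from the $i$-th term of the product contributes $q^{0+1+\cdots+(n-1)}=q^{\left(n\atop 2\right)}$, leaving $\prod_{i=0}^{n-1}(q^{n-i}-1)=\prod_{j=1}^{n}(q^j-1)=(q-1)\prod_{j=2}^{n}(q^j-1)$. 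Substituting this factorization into the expressions for $N_1$ and $N_0$ and collecting the common factor $q^{\left(n\atop 2\right)}(q-1)/q$ gives exactly the two formulas in the statement, with the $(q-1)^{-1}$ inside the bracket for $N_h$ coming from the term $q^{\left(n\atop 2\right)}(-1)^{n-1}=q^{\left(n\atop 2\right)}(q-1)\cdot(-1)^{n-1}/(q-1)$.

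There is essentially no obstacle here; the content is entirely in Theorem~\ref{thm1}, and what remains is the bookkeeping of the $q$-factorial factorization and care with the sign $(-1)^n$ versus $(-1)^{n-1}$ when passing between $N_0$ and $N_h$. The one point worth double-checking is that the evaluation $G(1,\lambda)=-1$ is used correctly (since $\lambda$ is a nontrivial additive character, $\sum_{x\in k}\lambda(x)=0$ forces $\sum_{x\in k^*}\lambda(x)=-1$, and the $n$-th power is $(-1)^n$), and that $N_\beta=N_1$ for every $\beta\in k^*$, which follows from the linearity of the trace together with the substitution $X\mapsto \beta^{-1}X$ mapping ${\rm GL}_n(k)$ bijectively to itself. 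With these checks in place the two displayed formulas follow immediately.
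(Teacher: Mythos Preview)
Your proposal is correct and follows exactly the paper's route: combine the counting identity \eqref{c2} with the character-sum identity \eqref{c3}, evaluate $G_{{\rm GL}_n(k)}(I,\lambda)$ via Theorem~\ref{thm1}, and solve the resulting $2\times 2$ linear system. The only difference is that you have spelled out the linear-algebra and $q$-factorial bookkeeping that the paper leaves implicit in the sentence ``Combining Theorem~\ref{thm1}, equations (\ref{c2}) and (\ref{c3}), we get''.
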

\textbf{Acknowledgement} The first author is supported by the
National Natural Science Foundation of China (Grant No. 11001145 and
Grant No. 11071277). The second  author is supported by the National
Research Foundation of Korea(NRF) grant funded by the Korea
government(MEST) (2011-0001184).


\begin{thebibliography}{??}
\parskip=0pt
\itemsep=0pt
\bibitem{Eichler}M. Eichler, Allgemeine Kongruenz-Klasseneinteilungen der
Ideale einfacher Algebren \"{u}ber algebraischen Zahlk\"{o}rpern und
ihre L-Reihen. J. Reine Angew. Math. 179 (1937), 227--251.
\bibitem{Sh} R. Ferguson, C. Hoffman, F. Luca, A. Ostafe, I. E. Shparlinski, Some additive combinatorics problems in matrix
rings, Rev. Mat. Complut. (23) 2010, 501--513.
\bibitem{Fulman} J. Fulman,  A New Bound for Kloosterman Sums, arXiv:math/0105172v3.
\bibitem{HL} S. Hu, Y. Li, On a uniformly distributed phenomenon in matrix
groups, arXiv:1103.3928.
\bibitem{Kim1}D. S. Kim, Gauss sums for classical groups over a finite field, Number theory, geometry and related topics (Iksan City, 1995), 97--101, Pyungsan Inst. Math. Sci., Seoul, 1996.
\bibitem{Kim2} D. S. Kim, I.-S. Lee, Gauss sums for ${\rm O}^{+}(2n,q)$, Acta
Arith. 78 (1996), 75--89.
\bibitem{Kim3}D. S. Kim, Gauss sums for
${\rm O}^-(2n,q)$, Acta Arith. 80 (1997), 343--365.
\bibitem{Kim}D. S. Kim, Gauss sums for general and special
linear groups over a finite field, Arch. Math. (Basel) 69 (1997),
297-304.
\bibitem{Kim4}D. S. Kim, Y.-H. Park, Gauss sums for orthogonal groups over a finite field of characteristic two, Acta Arith. 82 (1997), 331--357.
\bibitem{Kim5}D. S. Kim, Gauss sums for ${\rm U}(2n+1,q^2)$, J.
Korean Math. Soc. 34 (1997), 871--894.
\bibitem{Kim6}D. S. Kim,
Gauss sums for ${\rm O}(2n+1,q)$, Finite Fields Appl. 4 (1998),
62--86.
\bibitem{Kim7}D. S. Kim, Gauss sums for symplectic groups over a finite field, Monatsh.
Math. 126 (1998), 55--71.
\bibitem{Kim8} D. S. Kim, Exponential sums for
symplectic groups and their applications, Acta Arith. 88 (1999),
155--171.
\bibitem{Kim9}D. S. Kim, Hodges' Kloosterman sums. Number theory and related topics (Seoul, 1998), 119--133, Yonsei Univ. Inst. Math. Sci., Seoul,
2000.
\bibitem{Kim10}D. S. Kim, Exponential sums for ${\rm O}^-(2n,q)$ and their applications, Acta Arith. 97 (2001),67--86.
\bibitem{Kim11}D. S. Kim, Exponential sums for ${\rm O}(2n+1,q)$ and their applications, Glasg. Math. J. 43 (2001),
219--235.
\bibitem{Kim12}D. S. Kim, Exponential sums for ${\rm O}^+(2n,q)$ and their applications, Acta Math. Hungar. 91 (2001), 79--97.
\bibitem{Kim13}D. S. Kim, Sums for $U(2n,q^2)$ and their applications, Acta Arith. 101
(2002), 339--363.
\bibitem{Kim14}D. S. Kim, Codes associated with special linear groups and power moments of multi-dimensional Kloosterman sums, Ann. Mat. Pura Appl. (4) 190
(2011), 61--76.
\bibitem{kowalski} E. Kowalski, Some  aspects  and  applications  of the Riemann
hypothesis over finite fields, Milan J. of Mathematics, 78 (2010),
179--220.
\bibitem{Kondo}T. Kondo, On Gaussian sums attached to the general linear
groups over finite fields. J. Math. Soc. Japan 15 (1963), 244--255.
\bibitem{Lamprecht}E. Lamprecht, Struktur und Relationen allgemeiner Gau{\ss}cher
Summen in endlichen Ringen I, II. J. Reine Angew. Math. 197 (1957),
1--48.
\bibitem{Skoro} A. N. Skorobogatov, Exponential sums, the geometry of hyperplane sections, and some Diophantine problems, Isr. J. Math. 80 (1992),
359--379.
\bibitem{Stanley} R. P. Stanley, Enumerative Combinatorics, vol. I, second
edition, available at http://math.mit.edu/~rstan/ec/ec1.
\end{thebibliography}
\end{document}